\documentclass[11pt]{amsart}

\usepackage{amsmath,amssymb,amscd,amsthm,graphicx,enumerate}

\usepackage{xcolor}
\usepackage{hyperref}
\hypersetup{breaklinks=true}

\numberwithin{equation}{section}
\setcounter{secnumdepth}{2}
\setcounter{tocdepth}{2}

\setlength{\parskip}{1ex}

\theoremstyle{plain}

\makeatletter
\newtheorem*{rep@theorem}{\rep@title}
\newcommand{\newreptheorem}[2]{%
\newenvironment{rep#1}[1]{%
 \def\rep@title{#2 \ref{##1}}%
 \begin{rep@theorem}}%
 {\end{rep@theorem}}}
\makeatother

\newtheorem{theorem}[equation]{Theorem}
\newreptheorem{theorem}{Theorem}

\newtheorem{claim}[equation]{Claim}

\theoremstyle{remark}
\newtheorem{remark}[equation]{Remark}

\theoremstyle{definition}

\newcommand{\eps}{\varepsilon}

\begin{document}

\title[Revisiting ancient noncollapsed flows in $\mathbb{R}^3$]{Revisiting ancient noncollapsed flows in $\mathbb{R}^3$}
\author{Kyeongsu Choi}
\author{Robert Haslhofer}

\begin{abstract}
In this short paper, we give a new proof of the classification theorem for noncompact ancient noncollapsed flows in $\mathbb{R}^3$ originally due to Brendle-Choi (Inventiones 2019). Our new proof directly establishes selfsimilarity by combining the fine neck theorem from our joint work with Hershkovits and the rigidity case of Hamilton's Harnack inequality.
\end{abstract}

\maketitle

\section{Introduction}

To analyze singularities of the mean curvature flow it is crucial to understand ancient noncollapsed flows \cite{Andrews_noncollapsing,ShengWang,HaslhoferKleiner_meanconvex}, i.e. ancient mean-convex flows $M_t$, such that at any $p\in M_t$ the inscribed and exterior radius is at least $\alpha/H(p,t)$ for some uniform $\alpha>0$.   In particular, in $\mathbb{R}^3$ it is known thanks to \cite{CHH,CCS,BK_mult_one} that all blowup limits near all generic singularities are ancient noncollapsed flows. In the noncompact case,\footnote{By an important result of Angenent-Daskalopoulos-Sesum \cite{ADS1,ADS2} the only compact solutions are the round shrinking spheres and the ancient ovals from \cite{White_nature}.} these flows have been classified by Brendle and the first author:

\begin{theorem}[Brendle-Choi \cite{BC}]\label{thm_BC}
Any noncompact ancient noncollapsed flow in $\mathbb{R}^3$ is, up to scaling and rigid motion, either the static plane, the round shrinking cylinder, or the rotationally symmetric bowl.
\end{theorem}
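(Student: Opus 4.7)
Following the strategy indicated in the abstract, the plan has three main stages: reduce to the strictly convex case via a splitting dichotomy, extract sharp cylindrical asymptotics at $-\infty$ from the fine neck theorem of \cite{CHH}, and invoke the rigidity case of Hamilton's differential Harnack inequality to force $M_t$ to be a translating soliton.

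\textbf{Step 1: Weak convexity and splitting.}
By the global convexity estimate of \cite{HaslhoferKleiner_meanconvex}, every noncollapsed ancient flow is weakly convex. If the second fundamental form $h_{ij}$ has a nontrivial kernel at some spacetime point, the strong maximum principle applied to the evolution of $h_{ij}$ forces the flow to split off a line isometrically. Iterating the dichotomy in $\R^3$ one is reduced to either the static plane or a one-dimensional ancient convex flow, producing the static plane or the round shrinking cylinder. Henceforth we assume $M_t$ is strictly convex.

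\textbf{Step 2: Fine neck expansion.}
Apply the fine neck theorem of \cite{CHH} to $M_t$. For the rescaled flow $\bar M_\tau = e^{\tau/2} M_{-e^{-\tau}}$ as $\tau\to -\infty$, this yields a sharp pointwise asymptotic expansion around a unique round cylinder $S^1(\sqrt 2) \times \R \cdot e$, with a uniquely determined axis $e\in S^2$, and identifies a well-defined asymptotic translational velocity $V_0 = v_0 e \in \R^3$ describing the limiting drift of the tip of $M_t$ along $e$. In particular, the expansion implies that the tip curvature $H_{\mathrm{tip}}(t) = \max_x H(\cdot, t)$ converges to a positive constant as $t\to -\infty$.

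\textbf{Step 3: Harnack rigidity forces translator.}
Hamilton's differential Harnack inequality for MCF of strictly convex hypersurfaces states that for every tangent vector $W$,
\begin{equation*}
Z(W) := \D_t H + 2\langle \nabla H, W\rangle + h(W, W) \ge 0,
\end{equation*}
and the minimum $Z_{\min} = \D_t H - h^{-1}(\nabla H, \nabla H)$ is a nonnegative quantity whose vanishing at an interior spacetime point characterizes translating solitons. Indeed, on a translator with velocity $V_0$ one has $H = -\langle V_0, \nu\rangle$, which gives $\nabla H = -h(V_0^T)$ and $\D_t H = h(V_0^T, V_0^T)$, so $Z(V_0^T) \equiv 0$. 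At the tip of $M_t$ one has $\nabla H = 0$, hence $Z_{\min}(\mathrm{tip}_t) = \D_t H(\mathrm{tip}_t)$, which tends to zero as $t\to -\infty$ by the convergence of $H_{\mathrm{tip}}(t)$ from Step 2. Passing to a subsequential limit along the tip produces an ancient flow on which $Z_{\min}$ attains zero; by the rigidity case of Hamilton's Harnack, propagated by the strong parabolic maximum principle for $Z_{\min}$, that limit flow is a translator. A sharp matching argument, using that the fine neck expansion uniquely determines the asymptotic velocity $V_0$ on both $M_t$ and its tip blowdown, then transfers the translator identity back to $M_t$ itself.

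\textbf{Step 4: Identification with the bowl.}
A strictly convex noncollapsed translator in $\R^3$ is, up to rigid motion and scaling, the rotationally symmetric bowl, which completes the classification.

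The main obstacle will be \emph{Step 3}: the delicate point is to upgrade the subsequential vanishing of $Z_{\min}$ along the tip trajectory into a global translator identity for the original flow, which is exactly where the precise quantitative content of the fine neck theorem is needed. This is the key replacement for the rotational symmetry and subsequent ODE analysis in the original Brendle-Choi proof.
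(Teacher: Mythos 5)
Your high-level outline (reduce to the strictly convex case, use the fine neck theorem, invoke Harnack rigidity, conclude with the classification of translators) matches the paper's strategy, but Step 3 — which you yourself flag as the main obstacle — has genuine gaps, and the tools you would need to fill them are missing from your plan.

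\textbf{First, the claim in Step 2 that the fine neck expansion implies $H_{\mathrm{tip}}(t)$ converges to a positive constant as $t\to-\infty$ is not justified.} The fine neck theorem controls the renormalized flow near the cylindrical region; it says nothing directly about the curvature at the tip, which sits far from the neck. What is needed is a \emph{uniform bound on the cylindrical scale} at the cap (the paper's Claim 3.1): one supposes $Z(P_{h_i})\to\infty$ along some sequence, rescales so that $Z=1$, and passes to a limit which must again be a noncompact noncollapsed ancient flow with cylinder tangent flow. The contradiction is then obtained from the key structural fact you do not use: the \emph{differential neck theorem}, which upgrades the fine neck expansion to the statement that the slope $u_y^X$ satisfies $u_y^X\sim a\,e^{\tau/2}$ with a constant $a=a(\mathcal M)\neq 0$ that is \emph{independent of the center $X$} and scales like an inverse length. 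The rescaled flows then have slope parameter $a(\mathcal M^i)=Z(P_{h_i})^{-1}a(\mathcal M)\to 0$, contradicting $a\neq 0$ for the limit. Without this scale-invariant nonzero constant you have no way to control $Z(P_h)$, and hence no control on $H(P_h)$.

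\textbf{Second, your endgame in Step 3 is vague precisely where the work is.} You pass to a subsequential blowdown along $t\to-\infty$, obtain a translator in the limit, and then assert a ``sharp matching argument'' transfers the translator identity back to $\mathcal M$. Subsequential limits being translators do not, by themselves, make the original flow a translator. The paper's mechanism is different and more rigid: after normalizing $a=1/\sqrt 2$, one uses the Harnack inequality with $V=0$ to get monotonicity of $h\mapsto H(P_h)$, then shows $\lim_{h\to-\infty}H(P_h)=1$ \emph{and} $\lim_{h\to+\infty}H(P_h)=1$. The limit at $h\to+\infty$ requires a separate argument (ruling out $H(P_h)\to\infty$ via a cylinder-splitting contradiction), and your plan considers only the $t\to-\infty$ direction. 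Once both ends give limit $1$, monotonicity forces $H(P_h)\equiv 1$ exactly, so inserting $V=-A^{-1}\nabla H$ forces $\nabla H(P_h)\equiv 0$, and the Harnack equality case is achieved on the original flow $\mathcal M$ along the entire cap trajectory — not merely on a blowdown. This two-sided sandwich is the missing idea that replaces your ``matching argument.''

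In short: your proposal lacks (i) the differential neck theorem with its scale-covariant nonzero constant $a$, (ii) the resulting blowup argument bounding the cylindrical scale along the cap, and (iii) the two-sided ($h\to\pm\infty$) monotonicity argument that pins $H(P_h)\equiv 1$ and thereby applies Harnack rigidity directly to $\mathcal M$ rather than to a limit. These are precisely the ingredients the paper supplies to make Step 3 rigorous.
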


The theorem and the techniques introduced in its proof had far reaching consequences, including in particular the resolution of Ilmanen's mean-convex neighborhood conjecture \cite{CHH}, Huisken's genericity conjecture \cite{CM_generic,CCMS,CCS,CCMS_rev}, and White's two-sphere conjecture \cite{Brendle_sphere,HershkovitsWhite,CHH,BK_mult_one}. Moreover, it also led to a corresponding classification of $\kappa$-solutions in 3d Ricci flow \cite{Brendle_3dRicci,ABDS,BDS}, which confirmed a conjecture of Perelman \cite{Perelman1,Perelman2}, and in turn played a key role in the uniqueness of 3d Ricci flow through singularities \cite{BK_uniqueness}, and its topological and geometric applications \cite{BK_top1,BK_top2,BK_contr}.\\

Here, we give a new proof of Theorem \ref{thm_BC}, inspired by our recent classification of ancient noncollapsed flows in $\mathbb{R}^4$ \cite{CH_ancient_r4} (see also \cite{CHH_wing,CHH_translator,DH_shape,DH_no_rotation,CDDHS}). To describe our approach, let $\mathcal{M}=\{ M_t \}$ be any noncompact ancient noncollapsed flow in $\mathbb{R}^3$. We can assume that it does not split off a line, since otherwise it would be a static plane or a round shrinking cylinder. Moreover, by a rigid motion we can arrange that the axis of its cylindrical tangent flow at $-\infty$ is along the $x_1$-coordinate. Given any space-time point $X=(x,t)\in\mathcal{M}$, we then consider the centered renormalized flow $\bar{M}_\tau^X=e^{\tau/2}(M_{t-e^{-\tau}}-x)$. For $\tau$ sufficiently negative, depending only on the cylindrical scale $Z(X)$, the surface $\bar{M}_\tau^X$ can locally be expressed as graph of a function $u^X(y,\vartheta,\tau)$ over the normalized round cylinder $\mathbb{R}\times S^1(\sqrt{2})$. Considering the slope $u_y^X=\partial u^X/\partial y$, by differentiating the fine neck theorem from our joint work with Hershkovits \cite{CHH}, we first observe:

\begin{theorem}[differential neck theorem]\label{thm_DN}
There exists a constant $a=a(\mathcal{M})\neq 0$, such that for every $X\in\mathcal{M}$ for all $\tau\leq \mathcal{T}(Z(X))$ we have
\begin{equation}\label{diff_neck_exp}
\sup_{|y|\leq 10} \left|  u_y^X(y,\vartheta,\tau)-a e^{\tau/2} \right| \leq C e^{(\frac{1}{2}+\delta)\tau}.
\end{equation}
Here, $\delta>0$ and $C=C(\mathcal{M})<\infty$ are constants, and $\mathcal{T}:\mathbb{R}_+\to \mathbb{R}_{-}$ (possibly depending on $\mathcal{M}$) is a decreasing function.
\end{theorem}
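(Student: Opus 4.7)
My plan is to deduce the differential estimate directly from the original fine neck theorem of \cite{CHH} via interior parabolic regularity; the key observation is that the leading linear profile $ae^{\tau/2}y$ is itself an exact solution of the linearized renormalized flow, so the remainder $u^X - ae^{\tau/2}y$ satisfies a forced parabolic equation to which standard Schauder estimates apply.

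The input is the fine neck theorem of \cite{CHH}, which under our standing assumptions (non-splitting, cylindrical axis at $-\infty$ along $x_1$) produces a single constant $a = a(\mathcal{M}) \neq 0$, constants $\delta > 0$, $C < \infty$, and a decreasing $\mathcal{T}$ such that
\begin{equation*}
\sup_{|y|\leq 11}\bigl|u^X(y,\vartheta,\tau) - a\, e^{\tau/2}\, y\bigr| \leq C e^{(1/2+\delta)\tau}
\end{equation*}
for every $X \in \mathcal{M}$ and every $\tau \leq \mathcal{T}(Z(X))$. Here, the spatial centering at $x$ eliminates the three other unstable eigenmodes $1$, $\cos\vartheta$, $\sin\vartheta$ of the Ornstein--Uhlenbeck operator on the cylinder, the fixed tangent flow axis at $-\infty$ pins $a$ down as a single global constant, and the non-splitting hypothesis is precisely what forces $a \neq 0$.

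To upgrade this $C^0$ bound to the $C^1$ bound in \eqref{diff_neck_exp}, I would set $v^X := u^X - a\, e^{\tau/2}\, y$ and note that, since $y$ is an eigenfunction of the Ornstein--Uhlenbeck operator $\mathcal{L}$ on the cylinder with eigenvalue $1/2$, the profile $a e^{\tau/2} y$ is an exact solution of the linearized renormalized flow. Consequently $v^X$ satisfies a parabolic equation of the form $\partial_\tau v^X = \mathcal{L} v^X + N$ with $N$ at least quadratic in $u^X$ and its first two derivatives. Since $u^X$ converges to zero in $C^\infty_{\mathrm{loc}}$ on cylindrical slabs as $\tau \to -\infty$ (already a byproduct of the convergence used in the fine neck theorem), this is a uniformly parabolic equation with uniformly smooth coefficients. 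Interior parabolic Schauder estimates applied on a parabolic cylinder of unit temporal width over $|y| \leq 11$ then give
\begin{equation*}
\sup_{|y|\leq 10}\bigl|v^X_y(y,\vartheta,\tau)\bigr| \leq C' e^{(1/2+\delta)\tau},
\end{equation*}
which is precisely the stated estimate.

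The expected obstacle is bookkeeping rather than conceptual: one must ensure that the Schauder constants and the size of the nonlinear remainder $N$ are uniform in $X$, but this is automatic once we restrict to the regime $\tau \leq \mathcal{T}(Z(X))$, since $Z(X)$ is designed precisely to measure when the cylindrical approximation kicks in. All the hard conceptual work --- producing a single tilt coefficient $a(\mathcal{M})$ and certifying that it is nonzero --- is already carried out in \cite{CHH}; here we only need the elementary passage from $C^0$ to $C^1$ via parabolic regularity.
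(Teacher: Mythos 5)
Your approach is essentially the same as the paper's: start from the fine neck theorem of \cite{CHH} and upgrade the $C^0$ estimate to a $C^1$ estimate by interior parabolic Schauder estimates (the paper phrases this as ``differentiating the fine neck estimate, justified by Schauder''). However, there is a bookkeeping error in your statement of the input. The fine neck theorem from \cite{CHH} does \emph{not} eliminate the $\cos\vartheta$ and $\sin\vartheta$ modes; the actual estimate is
\begin{equation*}
\sup_{|y|\leq 100}\left| e^{-\tau/2}u^X(y,\vartheta,\tau) - a\,y - b^X\cos\vartheta - c^X\sin\vartheta\right| \leq C e^{\delta\tau},
\end{equation*}
where the coefficients $b^X,c^X$ depend on the center $X$ and are generically nonzero of size $O(1)$. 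Centering in space--time only kills the constant mode; the rotational modes survive. Consequently, your $v^X := u^X - a e^{\tau/2}y$ is not $O(e^{(1/2+\delta)\tau})$ in $C^0$ (it is only $O(e^{\tau/2})$), so Schauder applied directly to your $v^X$ would fall short of the required decay rate for $v_y^X$.

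The fix is straightforward and brings your argument into exact alignment with the paper: subtract the full unstable-mode profile and set $w^X := u^X - a e^{\tau/2}y - b^X e^{\tau/2}\cos\vartheta - c^X e^{\tau/2}\sin\vartheta$. Since $y$, $\cos\vartheta$, $\sin\vartheta$ are all $\mathcal{L}$-eigenfunctions with eigenvalue $1/2$, the full leading profile is an exact solution of the linearized renormalized flow, $w^X$ satisfies a forced parabolic equation with a quadratic remainder, and $\|w^X\|_{C^0(\{|y|\leq 11\})} \leq C e^{(1/2+\delta)\tau}$. Interior Schauder then gives $|w_y^X| \leq C' e^{(1/2+\delta)\tau}$ on $\{|y|\leq 10\}$, and since the $\cos\vartheta,\sin\vartheta$ terms are $y$-independent, $w_y^X = u_y^X - a e^{\tau/2}$, which is precisely \eqref{diff_neck_exp}. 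Everything else you say --- the uniformity of the Schauder constants for $\tau \leq \mathcal{T}(Z(X))$, and that the nontrivial work of producing $a\neq 0$ independent of $X$ is already done in \cite{CHH} --- is correct.
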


The constant $a=a(\mathcal{M})$ captures how size of the circular fibres changes as the observer moves along the $x_1$-axis. For example, for the rotationally symmetric bowl translating with unit speed in positive $x_1$-direction one has $a=1/\sqrt{2}$. Combining Theorem \ref{thm_DN} and the rigidity case of Hamilton's Harnack inequality from \cite{Hamilton_Harnack}, we then directly establish selfsimilarity:

\begin{theorem}[selfsimilarity]\label{thm_selfsim}
Any noncompact ancient noncollapsed flow in $\mathbb{R}^3$ that does not split off a line is selfsimilarly translating (and hence by \cite{Haslhofer_bowl}, up to scaling and rigid motion, is the rotationally symmetric  bowl).
\end{theorem}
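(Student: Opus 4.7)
Because $\mathcal{M}$ is ancient noncollapsed, it is weakly convex by the convexity estimate \cite{HaslhoferKleiner_meanconvex}, so Hamilton's Harnack inequality \cite{Hamilton_Harnack} applies: the quantity
\[
\mathcal{Z}(V) := \partial_t H + 2\langle \nabla H, V\rangle + A(V,V)
\]
is nonnegative for every tangent vector field $V$, with the rigidity case characterizing translating solitons. Given the classification of noncollapsed translators in \cite{Haslhofer_bowl}, it suffices to exhibit a space-time point of Harnack equality on $\mathcal{M}$. The natural candidate translation velocity is $v := \frac{1}{a\sqrt{2}}\, e_1$, matched from the bowl model (where $|a| = 1/\sqrt{2}$ at unit speed).

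At each time $t$, let $p(t) \in M_t$ realize $h(t) := \max_{M_t} H$. Since $\nabla H(p(t),t) = 0$, Hamilton's Harnack with $V = 0$ reduces to $\partial_t H(p(t),t) \geq 0$, so $h$ is nondecreasing in $t$. The noncollapsed condition bounds $h$ above, hence $h_{-\infty} := \lim_{t \to -\infty} h(t)$ is finite. Translating $\mathcal{M}$ to place $(p(t_k), t_k)$ at the origin along $t_k \to -\infty$ and invoking Haslhofer--Kleiner compactness \cite{HaslhoferKleiner_meanconvex}, extract a subsequential smooth limit $\mathcal{M}_{-\infty}$, itself ancient noncollapsed, on which $\max H \equiv h_{-\infty}$. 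Thus $\partial_t H = 0$ at the tip of $\mathcal{M}_{-\infty}$, and the rigidity case of Hamilton's Harnack forces $\mathcal{M}_{-\infty}$ to be a translator. Theorem \ref{thm_DN} passes to this limit with the same constant $a$, so by \cite{Haslhofer_bowl}, $\mathcal{M}_{-\infty}$ is the rotationally symmetric bowl of velocity $v$; in particular $h_{-\infty} = |v|$.

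It remains to upgrade this statement from $\mathcal{M}_{-\infty}$ to $\mathcal{M}$ itself. One route is via a forward analogue: the same argument applied at $t_k \to +\infty$ (using the eternity of noncompact ancient noncollapsed non-splitting flows in $\mathbb{R}^3$) produces a forward limit $\mathcal{M}_{+\infty}$, which is also a translator with the same constant $a$, hence again the bowl of velocity $v$, giving $h_{+\infty} = |v| = h_{-\infty}$. Combined with the monotonicity of $h$, this forces $h \equiv |v|$, so $\partial_t H(p(t),t) = 0$ for every $t$. Hamilton's rigidity case applied on $\mathcal{M}$ directly then yields that $\mathcal{M}$ is a translator, and \cite{Haslhofer_bowl} identifies it as the bowl of velocity $v$.

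The main obstacle is this last step. Justifying forward-eternity and the preservation of the constant $a$ under parabolic rescalings and limits on \emph{both} ends is technically delicate. An alternative route avoids the forward extraction by working with the scale-invariant quantity $\mathcal{W} := \mathcal{Z}(-v^T)/H^3 \geq 0$ directly on $\mathcal{M}$: $C^\infty_{\mathrm{loc}}$ convergence of the tip-centered flows to $\mathcal{M}_{-\infty}$ gives $\mathcal{W} \to 0$ along the tip sequence, and an ancient parabolic Harnack / Liouville argument then propagates $\mathcal{W} \equiv 0$ to all of $\mathcal{M}$. In either approach, the heart of the argument is the rigid universality of the constant $a$ in Theorem \ref{thm_DN} --- it is the same across all reference points $X$ and preserved under limits --- which forces every translator limit to be the same bowl and thereby pins down $\mathcal{M}$ globally.
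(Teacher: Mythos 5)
Your proposal follows the same high-level strategy as the paper — track a distinguished point along the flow, use Hamilton's Harnack to get monotonicity of $H$ along that trajectory, show the backward and forward limits are both unit-speed translators (forcing $H$ constant along the trajectory), and then invoke Harnack rigidity — but several of the steps you take for granted are exactly where the real work lies, and two of your assertions are not correct as stated.

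First, the claim that ``the noncollapsed condition bounds $h$ above'' is false: noncollapsing controls the inscribed radius in terms of $1/H$, not $H$ itself (the round shrinking cylinder is noncollapsed with $\max H \to \infty$). The paper instead bounds $H$ at the distinguished points through Claim 3.1, a uniform bound $\limsup_{h\to\pm\infty} Z(P_h) < \infty$ on the cylindrical scale, proved by a contradiction/blowup argument using the differential neck theorem. Second, your statement that ``Theorem \ref{thm_DN} passes to this limit with the same constant $a$'' is precisely the delicate point you cannot assume: without the cylindrical scale bound the translated limit could degenerate (e.g.\ to a shrinking cylinder or a lower-dimensional limit), in which case there is no constant $a$ at all. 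Claim 3.1, together with the argument ruling out $H(P_{h_i})\to\infty$ via a cylinder contradiction, is what makes the limit a nondegenerate noncollapsed flow to which the differential neck theorem applies; this is the technical heart of the paper's proof and is essentially absent from your plan. Third, your backward-limit rigidity step is incomplete: knowing $H\leq h_{-\infty}$ on $\mathcal{M}_{-\infty}$ with equality at the basepoint at the final time slice does not immediately give $\partial_t H = 0$ there (the strong maximum principle does not apply at a terminal time), whereas the paper's monotone quantity $H(P_h)$ pinches between equal limits at $h\to\pm\infty$ and hence is genuinely constant.

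There is also a structural difference worth noting. You track the max-$H$ point, whose existence and uniqueness on a noncompact surface require justification and whose trajectory may jump between flow lines; the paper tracks the tip $p_t$ (the min-$x_1$ point), which exists and is unique by strict convexity plus the slope expansion, and whose trajectory is the constant–Gauss-map trajectory — exactly the one along which Harnack with the optimal $V=-A^{-1}\nabla H$ yields $\frac{d}{dt}H(p_t,t)\geq 0$. Parametrizing by the height $h$ rather than by time $t$ also sidesteps your forward-eternity concern, since $h\to+\infty$ does not presuppose that the flow exists for all positive time. Your proposed fallback via a scale-invariant quantity $\mathcal{W}$ and a Liouville-type propagation is not the route the paper takes and is left entirely as a sketch. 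In short, the skeleton is right, but the cylindrical scale bound (Claim 3.1), the argument ruling out curvature blow-up at $+\infty$, and the careful choice of the tip trajectory are precisely the ingredients that turn this outline into a proof.
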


To describe this, by scaling we can assume that $a(\mathcal{M})=1/\sqrt{2}$. Consider the cap at time $t$, namely the unique $p_t \in M_t$ where $x_1$ is minimal. Denoting by $\mathfrak{t}(h)$ the time when the cap arrives at level $x_1=h$, this gives rise to a unique space-time point $P_h=(p_{ \mathfrak{t}(h)},\mathfrak{t}(h) )$. Via blowup arguments we then show that the cylindrical scale $Z(P_h)$ and the mean curvature $H(P_h)$ are uniformly bounded for $h\to \pm \infty$, as otherwise we could obtain a contradiction with Theorem \ref{thm_DN}. Since for $h \to \pm \infty$ we have convergence to translators with slope parameter $a=1/\sqrt{2}$, hence with speed $1$, the rigidity case of Hamilton's Harnack inequality then allows us to conclude.

\begin{remark}[higher dimensions]
Using the fine neck theorem from our joint work with Hershkovits and White \cite{CHHW} in lieu of the one from \cite{CHH}, our argument also yields a new proof of the classification theorem for uniformly two-convex solutions in $\mathbb{R}^{n+1}$ from \cite{BC2}.
\end{remark}

\noindent\textbf{Acknowledgments.}
KC has been supported by the KIAS Individual Grant MG078902, an Asian Young Scientist Fellowship, and the National Research Foundation (NRF) grants RS-2023-00219980 and RS-2024-00345403 funded by the Korea government (MSIT). RH has been supported by the NSERC Discovery grants RGPIN-2016-04331 and RGPIN-2023-04419.

\bigskip

\section{Background and differential neck theorem}

Let $\mathcal{M}=\{ M_t \}$ be any noncompact ancient noncollapsed flow in $\mathbb{R}^3$ that does not split off a line (equivalently, by \cite{HaslhoferKleiner_meanconvex} this means that $M_t$ is strictly convex). By general theory \cite{CM_uniqueness,HaslhoferKleiner_meanconvex}, the tangent flow at $-\infty$ is a round shrinking cylinder, namely in suitable coordinates we have
\begin{equation}\label{tangent_flow}
\lim_{\lambda \rightarrow 0} \lambda M_{\lambda^{-2}t}=\mathbb{R}\times S^{1}(\sqrt{-2t}).
\end{equation}
In other words, given any space-time point $X=(x,t)\in\mathcal{M}$, the centered renormalized flow,
\begin{equation}
\bar{M}_\tau^X=e^{\tau/2}(M_{t-e^{-\tau}}-x),
\end{equation}
converges for $\tau\to -\infty$ to normalized round cylinder $\Gamma=\mathbb{R}\times S^1(\sqrt{2})$. We can thus express $\bar{M}^X_\tau$ as graph of a function $u^X(\cdot,\tau)$ over $\Gamma\cap B_{\rho(\tau)}$, namely
\begin{equation}\label{graph_cyl}
\left\{ q+u^X(q,\tau)\nu(q)\, : \, q\in \Gamma\cap B_{\rho(\tau)}   \right\}\subset \bar{M}_\tau^X,
\end{equation}
where $\nu$ denotes the outwards unit normal of $\Gamma$, and $\lim_{\tau\to-\infty}\rho(\tau)= \infty$. More quantitatively, \eqref{graph_cyl} holds for all $\tau\leq - 2\log Z(X)$. Here, similarly as in \cite[Section 3.2]{CHH}, fixing a small constant $\eps_0>0$, the cylindrical scale $Z(X)$ is defined as the smallest $r<\infty$, such that the parabolically dilated flow $\mathcal{D}_{1/r}(\mathcal M -X)$ is $\varepsilon_0$-close in $C^{\lfloor1/\varepsilon_0 \rfloor}$ in $B(0,1/\varepsilon_0)\times [-1,-2]$ to the evolution of a round shrinking cylinder with radius $r(t)=\sqrt{-2t}$.

Since $\bar{M}^X_\tau$ moves by renormalized mean curvature flow, the evolution of $u^X=u^X(y,\vartheta,\tau)$ is governed by the Ornstein-Uhlenbeck operator
\begin{equation}
\mathcal{L}=\partial^2_y -\tfrac12 y \partial_y +\tfrac12 \partial^2_\vartheta +1,
\end{equation}
see e.g. \cite{Ecker_logsob}. This is a self-adjoint operator on the Hilbert space $\mathcal{H}=L^2(\Gamma,e^{-q^2/4}dq)$, and the only unstable eigenfunctions are the constant function $1$ with eigenvalue $1$ and the functions $y$, $\cos\vartheta$, $\sin\vartheta$ with eigenvalue $1/2$.\\

Based on the above spectral properties, it has been shown in \cite[Section 2]{BC} that for $\tau\to -\infty$ one has
\begin{equation}\label{eq_fine_neck_decay}
\| u^X(\cdot,\tau) \|_{\mathcal{H}}=O(e^{\tau/2}).
\end{equation}
The estimate \eqref{eq_fine_neck_decay} has been derived again (in a more general setting, which is not needed for our purpose) in \cite[Proposition 4.11]{CHH}. Pushing this a bit further, it has then been shown in \cite[Theorem 4.15]{CHH} that in fact 
\begin{equation}\label{eq_fine_neck_exp}
\sup_{|y|\leq 100} \left| e^{-\tau/2} u^X(y,\vartheta,\tau)- a  y - b^X \cos\vartheta - c^X \sin\vartheta \right| \leq Ce^{\delta \tau}
\end{equation}
for all $\tau\leq \mathcal{T}(Z(X))$, where $\delta>0$ and $C=C(\mathcal{M})<\infty$ are constants, and $\mathcal{T}:\mathbb{R}_+\to \mathbb{R}_{-}$  is a decreasing function, which may depending on $\mathcal{M}$. Here, the slope parameter $a=a(\mathcal{M})\neq 0$ is independent of the center $X$, while the expansion parameters $b^X$ and $c^X$ may depend on the center $X$. Note that the lemmas in \cite[Section 4.1]{CHH} were already derived in \cite[Section 2]{BC} for noncollapsed flows. Thus, the estimate \eqref{eq_fine_neck_exp} can be proven by concatenating \cite[Section 2]{BC} and \cite[Section 4.2.2 \& 4.2.3]{CHH}.

Differentiating \eqref{eq_fine_neck_exp}, which is justified thanks to standard Schauder estimates (see e.g. \cite{Lieberman}), we immediately get the differential neck expansion
\begin{equation}\label{eq_diff_neck_exp}
\sup_{|y|\leq 10} \left| e^{-\tau/2} u^X_y(y,\vartheta,\tau)-  a  \right| \leq Ce^{\delta \tau}
\end{equation}
for all $\tau\leq \mathcal{T}(Z(X))$. Note that the expansion \eqref{eq_diff_neck_exp} is equivalent to the expansion we stated in Theorem \ref{thm_DN} (differential neck theorem).

\begin{remark}[intuitive explanation]
To better understand the differential neck theorem, note that in light of the commutator identity $[\mathcal{L},\partial_y]=\tfrac12 \partial_y$ the evolution of $u^X_y$ is governed by the operator $\mathcal{L}'=\mathcal{L}-\tfrac12$. The only unstable eigenfunction of $\mathcal{L}'$ is the constant function $1$ with eigenvalue $1/2$, which explains that $u_y^X \sim a^X e^{\tau/2}$. Also, since for $\tau\to -\infty$ any two necks overlap, we see that the slope $a^X$ is in fact independent of the center $X$.
\end{remark}

\bigskip

\section{Proof of the classification theorem}

Let us first recall a few general facts from the theory of noncollapsed flows \cite{HaslhoferKleiner_meanconvex}. Any ancient noncollapsed flow is strictly convex unless it splits off a line. Moreover, for any ancient noncollapsed curve shortening flow the tangent flow at $-\infty$ is a static line or a round shrinking circle. Hence, any ancient noncollapsed flow in $\mathbb{R}^3$ that splits off a line is either a static plane or a round shrinking cylinder. Furthermore, we also recall that it has already been shown ten years ago by the second author \cite{Haslhofer_bowl}, inspired by Brendle's uniqueness result for steady 3d Ricci solitons \cite{Brendle_steady}, that any selfsimilarly translating noncollapsed flow in $\mathbb{R}^3$ that does not split off a line is, up to scaling and rigid motion, the rotationally symmetric bowl.
Thus, to give a new proof of Theorem \ref{thm_BC}, it suffices to prove Theorem \ref{thm_selfsim}.

\begin{proof}[{Proof of Theorem \ref{thm_selfsim}}]
Let $\mathcal{M}=\{ M_t \}$ be any noncompact ancient noncollapsed flow in $\mathbb{R}^3$ that does not split off a line. By a rigid motion and scaling we can assume that its tangent flow at $-\infty$ is given by \eqref{tangent_flow} and that the slope parameter from Theorem \ref{thm_DN} is given by $a(\mathcal{M})=1/\sqrt{2}$. The expansion \eqref{diff_neck_exp} and convexity imply that $\inf_{p\in M_t} x_1(p)> -\infty$. By strict convexity this infimum is attained at a unique point $p_t \in M_t$. Now, for any $h\in\mathbb{R}$ we denote by $\mathfrak{t}(h)$ the unique $t$ such that $x_1(p_t)=h$. This gives rise to the space-time point $P_h=(p_{\mathfrak{t}(h)},\mathfrak{t}(h))$, which we call the cap at level $h$. To proceed, we need a uniform bound for the cylindrical scale.

\begin{claim}[{cylindrical scale, c.f. \cite[Proposition 5.8]{CHH} and \cite[Proposition 6.2]{CHHW}}]\label{claim_cyl}
We have $\limsup_{h\to \pm \infty} Z(P_h) < \infty$.
\end{claim}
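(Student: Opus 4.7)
The plan is to argue by contradiction. Suppose along some sequence $h_i\to\pm\infty$ we have $Z_i:=Z(P_{h_i})\to\infty$. I consider the parabolically rescaled flows $\mathcal{M}^i := \mathcal{D}_{1/Z_i}(\mathcal{M}-P_{h_i})$. By construction, the cylindrical scale of $\mathcal{M}^i$ at the origin is exactly $1$, and the origin is the cap of the time--$0$ slice of $\mathcal{M}^i$. Since noncollapsing is preserved under rescaling, the Haslhofer--Kleiner compactness theorem \cite{HaslhoferKleiner_meanconvex} yields a subsequence converging smoothly on compact subsets of spacetime to an ancient noncollapsed limit $\mathcal{M}^\infty$ with $Z^\infty(0)=1$.

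Next I verify that $\mathcal{M}^\infty$ satisfies the hypotheses of Theorem \ref{thm_DN}. The tangent flow of $\mathcal{M}^\infty$ at $-\infty$ is the round shrinking cylinder inherited from $\mathcal{M}$ (any fixed axis offset shrinks like $1/Z_i\to 0$ in rescaled coordinates), so $\mathcal{M}^\infty$ is noncompact. It cannot split off a line: a static plane would force $Z^\infty(0)=\infty$, and a round cylinder would admit no cap, whereas by smooth convergence the origin remains the point of $M^\infty_0$ where $x_1$ attains its minimum. Hence by the facts recalled at the beginning of Section 3, $\mathcal{M}^\infty$ is strictly convex, and Theorem \ref{thm_DN} produces a nonzero slope $a^\infty:=a(\mathcal{M}^\infty)\neq 0$.

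The contradiction comes from computing $a^\infty$ by scaling. Let $v^i$ and $v^\infty$ denote the graph functions of the centered renormalized flows of $\mathcal{M}^i$ and $\mathcal{M}^\infty$ at the origin, and $u^{P_{h_i}}$ the graph function of $\mathcal{M}$ at $P_{h_i}$. Unwrapping the definitions of parabolic rescaling and centered renormalization, one obtains
\[
v^i(y,\vartheta,\tau) = u^{P_{h_i}}(y,\vartheta,\tau-2\log Z_i).
\]
Plugging this into Theorem \ref{thm_DN} applied to $\mathcal{M}$ at $P_{h_i}$ (with $a(\mathcal{M})=1/\sqrt{2}$) gives
\[
\sup_{|y|\leq 10}\Bigl| v^i_y(y,\vartheta,\tau) - \tfrac{1}{\sqrt{2}\,Z_i}\,e^{\tau/2}\Bigr| \leq \frac{C}{Z_i^{1+2\delta}}\,e^{(1/2+\delta)\tau},
\]
valid for $\tau\leq \mathcal{T}(Z_i)+2\log Z_i$. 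Fixing $\tau_0$ sufficiently negative and letting $i\to\infty$, both the right hand side and the linear correction vanish, so $v^i_y(\cdot,\cdot,\tau_0)\to 0$; by smooth convergence $v^\infty_y(\cdot,\cdot,\tau_0)=0$. This contradicts Theorem \ref{thm_DN} for $\mathcal{M}^\infty$, which gives $v^\infty_y(\cdot,\cdot,\tau_0) = a^\infty e^{\tau_0/2}+O(e^{(1/2+\delta)\tau_0})\neq 0$ for $\tau_0$ sufficiently negative.

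I expect the main technical step, as in \cite[Proposition 5.8]{CHH} and \cite[Proposition 6.2]{CHHW}, to be ensuring that the range $\tau\leq \mathcal{T}(Z_i)+2\log Z_i$ contains a $\tau_0$ independent of $i$. This amounts to a bound of the form $\mathcal{T}(Z)\geq -2\log Z - C$, expressing that the fine neck expansion is valid essentially as soon as the graph representation is, which should be extractable from the proof of the fine neck theorem in \cite{CHH}. The remaining ingredients---preservation of $Z^\infty(0)=1$ and of the cap structure under smooth convergence, ruling out that $\mathcal{M}^\infty$ is a plane or a cylinder---are direct.
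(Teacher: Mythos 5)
Your proposal follows essentially the same strategy as the paper: blow up at $P_{h_i}$ by cylindrical scale, pass to a Haslhofer--Kleiner limit $\mathcal{M}^\infty$, rule out that it splits off a line, apply Theorem~\ref{thm_DN} to get $a^\infty\neq 0$, and derive a contradiction from the scaling behavior $a(\mathcal{M}^i)=a(\mathcal{M})/Z_i\to 0$. Your scaling computation for $v^i$ in terms of $u^{P_{h_i}}$ and the resulting estimate are correct, and you are right to flag the requirement that $\mathcal{T}(Z)+2\log Z$ be bounded below (equivalently $\mathcal{T}(Z)\geq -2\log Z - C$); the paper states the contradiction more tersely as ``$a(\mathcal{M}^i)\to 0$ contradicts $a^\infty\neq 0$,'' which implicitly relies on the same uniformity.

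The one place where you diverge from the paper, and where your argument is slightly weaker as written, is in ruling out that $\mathcal{M}^\infty$ is a round shrinking cylinder. You argue ``a round cylinder would admit no cap, whereas by smooth convergence the origin remains \emph{the} point where $x_1$ attains its minimum.'' But smooth local convergence only gives that $M^\infty_0\subset\{x_1\geq 0\}$ with $0\in M^\infty_0$, i.e., the origin is \emph{a} minimizer of $x_1$; uniqueness of the cap can degenerate in the limit. In particular, a shrinking cylinder whose axis is perpendicular to $e_1$ can sit inside $\{x_1\geq 0\}$ with $x_1$ minimized along an entire line through the origin, so ``admits no cap'' does not immediately apply. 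To close this you would need to argue that the axis must be (close to) $e_1$ — say, because the necks of $\mathcal{M}^i$ at times of order $-1$ are $\eps_0$-close to cylinders with axis $e_1$ — at which point the cylinder would extend to $x_1=-\infty$, giving the contradiction. The paper instead splits into two cases by extinction time: extinction at $0$ contradicts $Z(\mathcal{M}^\infty,0)\geq 1$ (inherited by lower semicontinuity of the cylindrical scale), and extinction at some later time contradicts the halfspace containment of $M^\infty_t$ for $t>0$. Either route is workable, but your version needs the axis observation to be complete.
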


\begin{proof}[{Proof of Claim \ref{claim_cyl}}]
Suppose towards a contradiction that $h_i$ is a sequence converging to infinity or minus infinity, such that $Z(P_{h_i})\to \infty$. Let $\mathcal{M}^i=\mathcal{D}_{1/Z(P_{h_i})}(\mathcal M -P_{h_i})$ be the sequence of flows that is obtained from $\mathcal{M}$ by shifting the cap at level $h_i$ to the space-time origin and parabolically rescaling by $Z(P_{h_i})^{-1}$. Then, by \cite[Theorem 1.14]{HaslhoferKleiner_meanconvex} along a subsequence we can pass to a limit $\mathcal{M}^\infty$, which is a noncompact ancient noncollapsed flow, whose tangent flow at $-\infty$ is a round shrinking cylinder. 

If $\mathcal{M}^\infty$ was a round shrinking cylinder, then if it became extincting at time $0$ we would obtain a contradiction with the definition of the cylindrical scale, and if it became extinct at some later time we would obtain a contradiction with the fact that $M^\infty_t$ is contained in a halfspace for $t>0$. 

Hence, we can apply Theorem \ref{thm_DN}, which gives us a constant $a^\infty\neq 0$ associated to $\mathcal{M}^\infty$, such that \eqref{diff_neck_exp} holds.
However, this contradicts the fact that $a(\mathcal{M}^i)=Z(P_{h_i})^{-1}a(\mathcal{M})\to 0$, and thus proves the claim.
\end{proof}

Continuing the proof of the theorem, recall that by Hamilton's Harnack inequality \cite{Hamilton_Harnack} for every vector field $V$ we have
\begin{equation}\label{Ham_Harnack}
\partial_t H + 2\nabla_V H + A(V,V)\geq 0.
\end{equation}
In particular, inserting $V=0$ one gets $\partial_t H\geq 0$, so $h\mapsto H(P_h)$ is monotone.

Now, thanks to Claim \ref{claim_cyl} and the rigidity case of Hamilton's Harnack inequality \cite{Hamilton_Harnack} for some $h_i\to -\infty$ the sequence $\mathcal{M}^{i}=\mathcal{M}-P_{h_i}$ converges to a noncollapsed translator, which satisfies \eqref{diff_neck_exp} with $a=1/\sqrt{2}$, and hence translates with speed $1$. This implies $\lim_{h\to -\infty}H(P_h)= 1$.

Next, if we had $H(P_{h_i})\to \infty$ along some sequence $h_i\to \infty$, then by \cite[Theorem 1.14]{HaslhoferKleiner_meanconvex} along a subsequence the flows $\mathcal{M}^{i}=\mathcal{M}-P_{h_i}$ would converge to an ancient noncollapsed limit flow $\mathcal{M}^\infty$ that becomes extinct at time $0$. Moreover, since $H(P_{h_i})\to \infty$, for any $t<0$ the domain $K_t^\infty$ bounded by $M_t^\infty$ would contain a line in $x_1$-direction. Hence, $\mathcal{M}^\infty$ would be a round shrinking cylinder, contradicting the expansion \eqref{diff_neck_exp}. Consequently, arguing as in the previous paragraph we infer that $\lim_{h\to \infty}H(P_h)= 1$.
  
We have thus shown that $H(P_h)\equiv 1$, and this implies the assertion. Indeed, by inserting $V=-A^{-1}\nabla H$ in \eqref{Ham_Harnack}, in addition to $H(P_h)\equiv 1$, we get that $\nabla H (P_h)\equiv 0$.
Therefore, by the rigidity case of Hamilton's Harnack inequality  \cite{Hamilton_Harnack} our ancient solution $\mathcal{M}$ is selfsimilarly translating.
\end{proof}

\bigskip

\bibliography{ancient}

\bibliographystyle{alpha}

\vspace{5mm}

{\sc School of Mathematics, Korea Institute for Advanced Study, 85 Hoegiro, Dongdaemun-gu, Seoul, 02455, South Korea}\\

{\sc Department of Mathematics, University of Toronto,  40 St George Street, Toronto, ON M5S 2E4, Canada}\\

\emph{E-mail:} choiks@kias.re.kr, roberth@math.toronto.edu

\end{document}